\tikzset{
    arrowMe/.style={
        postaction=decorate,
        decoration={
            markings,
            mark=at position .5 with {\arrow[thick]{#1}}
        }
    }
}
\tikzset{point/.style = {fill=black,circle,inner sep=0.7pt}}
\tikzstyle{vertex}=[circle,fill=black!25,minimum size=20pt,inner sep=0pt]
\tikzstyle{edge} = [draw]
\tikzstyle{weight} = [font=\tiny]
\title{On the parity of the Betti numbers of 3-manifolds with a parallel vector field}
\author{Emmanuel Gnandi\and Raymond A. Hounnonkpe}
\address{
University of Toulouse, France\\
and\\
Institut de Mathématiques et de Sciences Physiques (IMSP),\\
Université d'Abomey-Calavi, Porto-Novo, Bénin.
}
\email{kpanteemmanuel@gmail.com, rhounnonkpe@ymail.com}
\newtheorem{thm}{Theorem}[section]
\newtheorem{cor}{Corollary}[section]
\newtheorem{defn}[thm]{Definition}
\numberwithin{equation}{section}
\begin{document}
\maketitle
\begin{abstract}
The question of whether a closed, orientable manifold can admit a nontrivial vector field that is parallel with respect to some Riemannian metric is a classical problem in differential geometry, first posed by S. S. Chern \cite{chern1966geometry}. In this work, we provide a complete answer to Chern's question in dimension three. Specifically, we show that a closed, orientable 3-manifold admits a nontrivial parallel vector field with respect to some Riemannian metric if and only if it is a Kähler mapping torus. Furthermore, we prove that the Betti numbers of any such 3-manifold are necessarily odd. A full classification of these manifolds is also obtained. Similar results are established for compact orientable Lorentzian 3-manifolds admitting either a parallel timelike vector field or a parallel lightlike vector field.
\end{abstract}
\section{Introduction}

In 1966, S. S. Chern~\cite{chern1966geometry} posed the problem of determining when a compact orientable manifold $M$ admits a nontrivial vector field that is parallel with respect to some Riemannian metric (\cite{chern1966geometry} Example 3). He observed that the first two Betti numbers necessarily satisfy
\[
b_1(M) \ge 1, \qquad b_2(M) \ge b_1(M) - 1,
\]
a condition now referred to as Chern’s criterion, and conjectured that these inequalities are not sufficient~\cite{chern1966geometry}.

The situation for manifolds with boundary was clarified by P. Percell~\cite{percell1981parallel}, who established that every compact manifold with boundary admits a vector field parallel with respect to some Riemannian metric. For closed manifolds, however, the problem remains considerably more intricate: identifying the precise topological conditions that are both necessary and sufficient for the existence of a nonvanishing parallel vector field is still a subtle challenge. It is known~\cite{karp1977parallel} that if a closed $n$-dimenional Riemannian manifold carries a parallel vector field, then its Euler–Poincaré characteristic must vanish. Furthermore, Bott~\cite{bott1967vector} demonstrated that all Pontryagin numbers of such a manifold are zero. Extending these observations, Karp~\cite{karp1977parallel} derived additional necessary conditions on the Betti numbers, namely
\[
b_1(M) \ge 1, \qquad 
b_{l+1}(M) \ge b_l(M) - b_{l-1}(M), \quad \text{for all } 1 \le l \le n-1.
\]
He also constructed an explicit example of a manifold satisfying both Chern’s criterion and Bott’s vanishing conditions on the Pontryagin numbers, yet admitting no parallel vector field for any Riemannian metric thus confirming Chern’s conjecture that the criterion is not sufficient.

A complete topological characterization was later obtained by Welsh~\cite{welsh1986manifolds}, who showed that compact manifolds admitting a parallel vector field are precisely compact fiber bundles over tori with finite structural group.

The purpose of this note is to resolve Chern's question in dimension $3$. We prove that the only $3$-dimensional closed orientable manifolds that admit a nontrivial parallel vector field with respect to a Riemannian metric are K\"ahler mapping torus. As a consequence, we show that their Betti numbers are necessarily odd, and we provide a full classification. Analogous results are derived in the Lorentzian framework, where we consider $3$-manifolds endowed with either parallel timelike or lightlike vector fields.

% -------------------------------------------------------------
\section{Basic Notions}

\subsection{Kähler Mapping Torus}

We begin by recalling one of the fundamental constructions underlying our results.

\begin{defn}
Let $\varphi \in \mathrm{Diff}(\Sigma)$ be a diffeomorphism of a closed, connected manifold $\Sigma$.  
The mapping torus of $\varphi$ is defined as
\[
\Sigma_{\varphi} = \frac{\Sigma \times [0,1]}{(x,0) \sim (\varphi(x),1)}.
\]
It forms the total space of a smooth fiber bundle
\[
\Sigma \hookrightarrow \Sigma_{\varphi} \xrightarrow{\;\pi\;} \mathbb{S}^1.
\]
\end{defn}

If $(\Sigma, \Omega)$ is a symplectic manifold and $\varphi$ is a symplectomorphism, i.e. $\varphi^{*}\Omega = \Omega$, then $\Sigma_{\varphi}$ is called a symplectic mapping torus.  
Moreover, if $(\Sigma, \Omega)$ is a Kähler manifold endowed with a Hermitian structure $(J,h)$, and if $\varphi$ is a Hermitian isometry satisfying
\[
\varphi_{*} \circ J = J \circ \varphi_{*}, 
\qquad 
\varphi^{*}h = h,
\]
(which automatically implies $\varphi^{*}\Omega = \Omega$), then $\Sigma_{\varphi}$ is referred to as a Kähler mapping torus \cite{li2008topology}.

% -------------------------------------------------------------

\section{Main Results}
\label{thm:Chern}

The main aim of this paper is to
 classify $M$ which carry a nontrivial vector field that is
 parallel with respect to a Riemannian metric.
 
\begin{thm}
Let \( M \) be a closed, orientable \(3\)-manifold. The following statements are equivalent:
\begin{enumerate}
    \item \(M\) admits a closed \(1\)-form \(\alpha\) and a Killing vector field \(Z\) such that
    \[
    \alpha(Z) = \text{constant} \neq 0.
    \]

    \item \(M\) is a K\"ahler mapping torus.

    \item \(M\) admits a nontrivial parallel vector field with respect to some Riemannian metric.
\end{enumerate}
\end{thm}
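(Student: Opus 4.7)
The plan is to establish the three implications in the cyclic order $(3) \Rightarrow (1) \Rightarrow (2) \Rightarrow (3)$, with the first and last being essentially formal and the middle one carrying all the geometric content.

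For $(3) \Rightarrow (1)$, I would observe that any nontrivial parallel vector field $X$ on $(M, g)$ is automatically Killing and has constant length. Setting $Z := X$ and $\alpha := g(X, \cdot)$ then gives a closed $1$-form (because $\nabla \alpha = 0$ implies $d\alpha = 0$) and a Killing field satisfying $\alpha(Z) = |X|^2_g = \text{const} \neq 0$. For $(2) \Rightarrow (3)$, given a K\"ahler mapping torus $\Sigma_\varphi$ whose Hermitian metric $h$ is preserved by $\varphi$, the product metric $h \oplus dt^2$ on $\Sigma \times \mathbb{R}$ is invariant under the deck transformation $(x, t) \mapsto (\varphi(x), t+1)$ and therefore descends to $M$; the descended field $\partial_t$ is parallel for this metric.

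The heart of the proof is $(1) \Rightarrow (2)$. After rescaling I may assume $\alpha(Z) = 1$, so that $\alpha$ and $Z$ are both nowhere zero. Cartan's formula gives $\mathcal{L}_Z \alpha = d(\iota_Z \alpha) + \iota_Z d\alpha = 0$, so the flow of $Z$ preserves $\alpha$. Since $\mathrm{Isom}(M, g)$ is compact, the closure of the $1$-parameter group generated by $Z$ is a torus $T^k \subset \mathrm{Isom}(M, g)$, and every element of $T^k$ preserves $\alpha$ by continuity. My strategy is now to replace $\alpha$ by a closed $1$-form $\alpha'$ in a nearby \emph{rational} cohomology class while keeping $\alpha'(Z)$ a nonzero constant. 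Because $T^k$ is connected it acts trivially on $H^1(M; \mathbb{R})$, so averaging any closed representative over $T^k$ yields a $T^k$-invariant closed form in the same class; applying this to a representative of a rational class sufficiently close to $[\alpha]$ in $H^1(M; \mathbb{R})$ produces the desired $\alpha'$. The $T^k$-invariance together with $d\alpha' = 0$ forces $\alpha'(Z)$ to be globally constant, and $C^0$-continuity of the averaging operator makes that constant close to $1$, hence nonzero; after rescaling I arrange $\alpha'(Z) = 1$ and integer periods.

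Integrating $\alpha'$ then produces a smooth submersion $\pi : M \to S^1$ whose fibers are closed orientable surfaces $\Sigma$. Since $\pi_* Z = \alpha'(Z) = 1$, the flow of $Z$ sends $\pi^{-1}(0)$ to $\pi^{-1}(t)$, and the time-$1$ map restricts to the monodromy $\varphi : \Sigma \to \Sigma$, which is an isometry of the induced metric $g|_\Sigma$ because $Z$ is Killing for $g$. Equipping $\Sigma$ with the canonical K\"ahler structure determined by $g|_\Sigma$ and its orientation (complex structure via the Hodge star, symplectic form the area form) makes the orientation-preserving isometry $\varphi$ a Hermitian isometry, so $M \cong \Sigma_\varphi$ is a K\"ahler mapping torus. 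The main obstacle I anticipate is precisely the averaging step: showing that the rational-class $T^k$-invariant representative $\alpha'$ simultaneously exists and remains $C^0$-close enough to $\alpha$ to preserve the nonvanishing of $\alpha'(Z)$. Once this technical point is settled, the remainder of the argument reduces to standard constructions for nowhere-vanishing closed $1$-forms on a closed $3$-manifold.
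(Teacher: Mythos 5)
Your proof is correct, but the route is genuinely different from the paper's, most notably for the key implication $(1)\Rightarrow(2)$. The paper proceeds structurally: it sets $\Omega:=\iota_Z v_g$, checks that $(\alpha,\Omega)$ is a cosymplectic structure with Reeb field $Z$, invokes the K-cosymplectic machinery of Bazzoni--Goertsches and Goldberg's normality result to conclude that $M$ is co-K\"ahler, and then cites Li's theorem identifying compact co-K\"ahler manifolds with K\"ahler mapping tori. You instead give a hands-on, Tischler-style argument: average a closed form in a nearby rational class over the torus closure $T^k\subset\mathrm{Isom}(M,g)$ of the Killing flow to get an invariant closed $1$-form $\alpha'$ with $\alpha'(Z)$ a nonzero constant, fiber $M$ over $\mathbb{S}^1$, and use the isometric flow of $Z$ as the monodromy, the $2$-dimensional fiber acquiring its canonical K\"ahler structure (Hodge star plus area form) which an orientation-preserving isometry automatically preserves. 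Your averaging step does work: since $T^k$ acts by $g$-isometries the averaging operator is a $C^0$-contraction, invariance plus closedness forces $\alpha'(Z)$ to be constant, and the constant is close to $1$; alternatively one can average an integral basis of closed forms and take rational combinations. What your approach buys is a self-contained proof that in effect reproves the needed special case of Li's theorem, at the price of using dimension $3$ essentially (only for a surface fiber does an orientation-preserving isometry give a Hermitian isometry for a canonical K\"ahler structure), whereas the paper's citation-based route plugs directly into the cosymplectic/co-K\"ahler framework it reuses later (e.g.\ for the Betti-number and closed-orbit results). Your $(2)\Rightarrow(3)$ is also more elementary than the paper's (descend the product metric $h\oplus dt^2$ and the parallel field $\partial_t$, versus quoting $\nabla^{LC}\eta=0$ for co-K\"ahler manifolds), and $(3)\Rightarrow(1)$ coincides with the paper's. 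Three routine points you should spell out: you cannot simultaneously normalize $\alpha'(Z)=1$ and integer periods --- rescale $Z$ (or the flow time) instead; the monodromy is orientation-preserving because the time-one map preserves the orientation of $M$ and the coorientation defined by $\alpha'$; and the fiber of the resulting bundle should be taken connected (choose a primitive integral class, or pass to the return map on one component), since the definition of a mapping torus requires a connected fiber.
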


\begin{proof}
We prove the equivalence by establishing the implications (1) \(\Rightarrow\) (2), (2) \(\Rightarrow\) (3), and (3) \(\Rightarrow\) (1).

\noindent\text{Proof that (1) \(\Rightarrow\) (2).}
Assume that \(M\) admits a closed \(1\)-form \(\alpha\) and a Killing vector field \(Z\) with respect to a Riemannian metric \(g\), such that \(\alpha(Z)\) is a nonzero constant. We may assume without loss of generality that \(\alpha(Z) = 1\). Let \(v_g\) denote the Riemannian volume form associated with \(g\), and define the \(2\)-form
\[
\Omega := \iota_{Z} v_g.
\]
A straightforward computation gives
\[
d\Omega = d(\iota_{Z} v_g) = \mathcal{L}_{Z} v_g = 0,
\]
where the last equality follows from the fact that \(Z\) is a Killing vector field. Moreover, we observe that
\[
\iota_{Z}(\alpha \wedge v_g) = 0,
\]
which implies
\[
v_g = \alpha \wedge \Omega.
\]

From the above, we deduce that \(\alpha(Z) = 1\) and \(\iota_Z \Omega = 0\). Hence, \(Z\) is the Reeb vector field of the cosymplectic structure \((\alpha, \Omega)\). Since \(Z\) is a Killing vector field with respect to \(g\), it follows from \cite{bazzoni2015k} that \((M, \alpha, \Omega, g)\) is a K-cosymplectic manifold. By \cite[Proposition~2.8]{bazzoni2015k}, \(M\) carries an almost contact metric structure \((\eta, \phi, \xi, h)\) with \(\eta = \alpha\) and \(\xi = Z\), satisfying \(\mathcal{L}_{Z}h = 0\). Furthermore, by \cite[Proposition~3]{goldberg1969integrability}, this structure is normal (i.e., the Nijenhuis tensor \(N_\phi\) vanishes), and hence \((M, \alpha, Z, \phi, h)\) is a co-K\"ahler manifold. Finally, by \cite[Theorem~2]{li2008topology}, \(M\) is a K\"ahler mapping torus. This completes the proof of the first implication.

\noindent\text{Proof that (2) \(\Rightarrow\) (3).}
Suppose \(M\) is a K\"ahler mapping torus. By \cite[Theorem~2]{li2008topology}, \(M\) admits a co-K\"ahler structure \((\phi, \xi, \eta, g)\) satisfying
\[
\phi^2 = -I + \eta \otimes \xi, \qquad g(X,Y) = g(\phi X, \phi Y) + \eta(X)\eta(Y).
\]
The metric \(g\) is compatible with the almost contact structure \((\phi, \xi, \eta)\), and we have \(\eta(X) = g(X, \xi)\). Moreover, the Nijenhuis tensor vanishes:
\[
N^\phi(X,Y) = [\phi X, \phi Y] - \phi[\phi X,Y] - \phi[X, \phi Y] + \phi^2[X,Y] = 0.
\]

As established in \cite{blair2010riemannian, blair1966theory, li2008topology, marrero1998new, cappelletti2013survey, olszak1981almost}, every compact co-K\"ahler manifold satisfies
\[
\nabla^{LC} \eta = 0,
\]
where \(\nabla^{LC}\) denotes the Levi-Civita connection associated with \(g\). Since \(\eta(X) = g(X, \xi)\), a straightforward calculation shows that the Reeb vector field \(\xi\) is parallel:
\[
\nabla^{LC} \xi = 0.
\]
This completes the proof of the second implication.

\noindent\text{Proof that (3) \(\Rightarrow\) (1).}
Assume that \(M\) admits a nontrivial parallel vector field \(Z\) with respect to a Riemannian metric \(g\). Let \(\beta := \iota_Z g\) denote the \(1\)-form associated with \(Z\), and let \(D\) be the Levi-Civita connection of \(g\). Since \(D Z = 0\), it follows that \(D\beta = 0\), and consequently \(d\beta = 0\). Furthermore, the function \(g(Z, Z) = \beta(Z)\) is constant and positive on \(M\), as \(Z\) is nontrivial. The vector field $Z$ is a Killing vector field (since any parallel vector field is Killing) and \(\beta(Z)\) is a positive constant. This completes the proof of the third implication and the theorem.
\end{proof}

From Theorem~\ref{thm:Chern}, the only $3$-dimensional closed, orientable manifolds that admit a nontrivial parallel vector field with respect to some Riemannian metric are the Kähler mapping torus. 
This completely answers Chern’s question in dimension~$3$.\\

Yves Carrière~\cite{carriere1984flots} classified Riemannian flows on compact, orientable $3$-manifolds, and deduced that any such manifold admitting a nontrivial Killing flow is a Seifert manifold. 
Now, consider a $3$-manifold $M$ endowed with a nontrivial parallel vector field $Z$ with respect to some Riemannian metric. 
Since $Z$ is necessarily a Killing vector field, it follows from~\cite{carriere1984flots} that $M$ is a Seifert manifold. 
In what follows, we study the parity of the Betti numbers of $M$ and provide a complete classification of $3$-manifolds admitting a nontrivial parallel vector field with respect to some Riemannian metric. 
This result may be viewed as a refinement of Carrière’s classification.

\begin{thm}\label{thm:Betti}
Let \( M \) be a closed, orientable \(3\)-manifold admitting a nontrivial parallel vector field with respect to some Riemannian metric. Then:
\begin{enumerate}
    \item All Betti numbers of \( M \) are odd.
    
    \item \( M \) is diffeomorphic to one of the following:
    \begin{enumerate}
        \item the product \( \mathbb{S}^2 \times \mathbb{S}^1 \);
        \item the \(3\)-torus \( \mathbb{T}^3 \);
        \item a nontrivial torus bundle \( \mathbb{T}^2_A \) over \( \mathbb{S}^1 \), where
        \[
        A \in \left\{
        \begin{pmatrix} 0 & 1 \\ -1 & 0 \end{pmatrix},
        \begin{pmatrix} -1 & 0 \\ 0 & -1 \end{pmatrix},
        \begin{pmatrix} 0 & -1 \\ 1 & 1 \end{pmatrix},
        \begin{pmatrix} -1 & -1 \\ 1 & 0 \end{pmatrix}
        \right\};
        \]
        \item a quotient \( (\mathbb{H}^2 \times \mathbb{R}) / \Gamma \), where \( \Gamma \subset \mathrm{Isom}(\mathbb{H}^2 \times \mathbb{R}) \) is a discrete subgroup acting freely.
    \end{enumerate}
\end{enumerate}
\end{thm}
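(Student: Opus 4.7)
The plan is to invoke Theorem~\ref{thm:Chern} to realise $M$ as a K\"ahler mapping torus $\Sigma_\varphi$ of a closed orientable surface $\Sigma$ by a Hermitian isometry $\varphi$, and then to extract both the parity of the Betti numbers and the diffeomorphism type from properties of $\varphi$.

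For part~(1), I would run the Wang exact sequence of the fibration $\Sigma \hookrightarrow M \to \mathbb{S}^1$. Because $\varphi$ is a Hermitian isometry, the induced map $\varphi^{*}$ is unitary (and therefore semisimple) on $H^{*}(\Sigma,\mathbb{C})$, so the kernel and the cokernel of $\varphi^{*}-I$ have the same dimension. Writing $b_k^\varphi$ for that common dimension in degree $k$ one obtains $b_k(M)=b_k^\varphi+b_{k-1}^\varphi$. Since $\Sigma$ is connected and $\varphi$ is orientation-preserving, $b_0^\varphi=b_2^\varphi=1$, giving $b_0(M)=b_3(M)=1$. It therefore suffices to prove that $b_1^\varphi$ is even, and this is where the K\"ahler hypothesis does its essential work: $\varphi^{*}$ preserves the Hodge decomposition $H^1(\Sigma,\mathbb{C})=H^{1,0}\oplus H^{0,1}$, whose summands are interchanged by complex conjugation, so the real dimension of the $(+1)$-eigenspace on $H^1(\Sigma,\mathbb{R})$ equals $2\dim_{\mathbb{C}}(H^{1,0})^{\varphi}$, an even number.

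For part~(2), I would split on the genus $g$ of $\Sigma$. If $g=0$, every Hermitian isometry of $\mathbb{S}^2$ is a rotation, hence isotopic to the identity through isometries, so $\Sigma_\varphi\cong \mathbb{S}^2\times\mathbb{S}^1$. If $g=1$, the diffeomorphism type of the $T^{2}$-bundle $\Sigma_\varphi$ depends only on the conjugacy class in $GL(2,\mathbb{Z})$ of the linear part of $\varphi$; that part must preserve both a flat metric and a complex structure, which forces it to have finite order. Enumerating the non-trivial finite-order conjugacy classes in $GL(2,\mathbb{Z})$ (orders $2$, $3$, $4$ and $6$) produces exactly the four matrices in the statement, while trivial monodromy recovers the standard $\mathbb{T}^3$. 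If $g\ge 2$, Hurwitz's automorphism theorem forces $\varphi$ to be of finite order, so $M$ is finitely covered by $\Sigma_g\times\mathbb{S}^1$; this identifies the Thurston geometry of $M$ with $\mathbb{H}^{2}\times\mathbb{R}$ and yields the required presentation $M=(\mathbb{H}^{2}\times\mathbb{R})/\Gamma$ with $\Gamma$ a torsion-free discrete subgroup of $\mathrm{Isom}(\mathbb{H}^{2}\times\mathbb{R})$.

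I expect the main obstacle to be the $g=1$ step, which requires careful bookkeeping of conjugacy classes in $GL(2,\mathbb{Z})$ (noting that conjugation by $\mathrm{diag}(1,-1)$ identifies a matrix with its inverse, collapsing the $SL(2,\mathbb{Z})$-classes of orders $3$, $4$ and $6$ by the right amount), together with a direct check that each listed matrix is actually realised by a Hermitian isometry of a suitably tuned flat torus (the square lattice for order $4$, the hexagonal lattice for orders $3$ and $6$). The case $g\ge 2$ is structurally cleaner, but still demands one to verify that the deck group $\Gamma$ preserves the product structure on $\mathbb{H}^{2}\times\mathbb{R}$ and acts freely, both of which follow from the existence of the honest finite cover $\Sigma_g\times\mathbb{S}^1\to M$.
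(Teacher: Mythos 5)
Your proposal is correct in substance but follows a genuinely different route from the paper. For part (1) the paper never touches the fibration directly: it quotes the theorem of Chinea--de Le\'on--Marrero that compact co-K\"ahler manifolds have odd first Betti number and then invokes Poincar\'e duality, whereas you rederive the parity from the Wang sequence of \(\Sigma\hookrightarrow M\to\mathbb{S}^1\) together with the \(\varphi^{*}\)-invariance of the Hodge decomposition of \(H^1(\Sigma,\mathbb{C})\) (note that \(\dim\ker(\varphi^{*}-I)=\dim\mathrm{coker}(\varphi^{*}-I)\) is automatic by rank--nullity, so unitarity is not even needed for the Wang bookkeeping; holomorphy of \(\varphi\) is what carries the Hodge step). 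For part (2) the paper argues via Bazzoni--Oprea (periodicity of the monodromy), Scott's theorem that \(M\) is Seifert fibered with zero Euler number, and a case division on the sign of \(\chi\) of the base orbifold, quoting the classification of \(\mathbb{S}^2\times\mathbb{R}\)-manifolds, of flat \(3\)-manifolds (with Marrero et al.\ identifying the four torus bundles), and the \(\mathbb{H}^2\times\mathbb{R}\) case; you instead argue fiberwise by the genus of \(\Sigma\), using isotopy triviality on \(\mathbb{S}^2\), the identification \(\mathrm{MCG}(\mathbb{T}^2)\cong \mathrm{GL}(2,\mathbb{Z})\) and the classification of torus bundles by the monodromy up to conjugacy and inversion, and finiteness of the holomorphic automorphism group in genus \(\ge 2\). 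Both routes yield the same list; yours is more self-contained, trading the co-K\"ahler and Seifert-fibration citations for elementary mapping-class-group bookkeeping.

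Two spots need tightening. In genus \(0\) the K\"ahler metric need not be round, so ``every Hermitian isometry is a rotation'' is not literally available; but \(\varphi\) is a holomorphic automorphism of \((\Sigma,J)\), which is biholomorphic to the Riemann sphere, hence a M\"obius transformation and isotopic to the identity (or simply: every orientation-preserving diffeomorphism of \(\mathbb{S}^2\) is isotopic to the identity), which suffices. In genus \(\ge 2\), deducing the presentation \(M=(\mathbb{H}^2\times\mathbb{R})/\Gamma\) ``from the existence of the finite cover \(\Sigma_g\times\mathbb{S}^1\)'' is not automatic: inheriting a geometric structure from a finite cover is a theorem, not a formality. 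The clean fix avoids it altogether: since \(\varphi\) is a finite-order holomorphic automorphism, it is an isometry of the uniformizing hyperbolic metric on \(\Sigma\), so the mapping torus is directly the quotient of \(\mathbb{H}^2\times\mathbb{R}\) by the discrete, free, isometric group generated by lifts of \(\pi_1(\Sigma)\) and \((x,t)\mapsto(\tilde\varphi(x),t+1)\). Similarly, in genus \(1\) the K\"ahler metric need not be flat; what you actually use is only that \(\varphi\) generates a precompact subgroup of \(\mathrm{Isom}(\Sigma,h)\), so its image in \(\mathrm{GL}(2,\mathbb{Z})\) preserves a positive-definite form and is therefore of finite order, after which your conjugacy-class enumeration applies. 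Finally, the realization checks (square and hexagonal lattices) are not needed for the statement, which only asserts that \(M\) belongs to the list.
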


\begin{proof}
We proceed in several steps.

\noindent\text{(1).}
By Theorem~\ref{thm:Chern}, \( M \) is a Kähler mapping torus. According to \cite[Theorem~2]{li2008topology}, such a manifold is co-Kähler. Furthermore, by \cite[Theorem~11]{Chinea1993TopologyOC}, the first Betti number \( b_1(M) \) is odd. Applying Poincaré duality, it follows that all Betti numbers \( b_k(M) \) are odd.

\noindent\text{(2).}
Since \( M \) is a Kähler mapping torus, we have \( M \simeq \Sigma_\varphi \), where \( (\Sigma, J, h) \) is a Kähler manifold. Let \( \mathrm{Isom}(\Sigma, h) \) denote the group of Hermitian isometries of \( \Sigma \), and \( \mathrm{Isom}_0(\Sigma, h) \) its identity component. By \cite[Theorem~6.6]{bazzoni2014structure}, the diffeomorphism \( \varphi \) is either isotopic to the identity or of finite order in \( \mathrm{Isom}(\Sigma, h) / \mathrm{Isom}_0(\Sigma, h) \); in particular, \( \varphi \) is periodic. By \cite[Theorem~5.4]{scott1983geometries}, \( M \) is a Seifert fibered space over a 2-dimensional orbifold \( B \) with Euler number zero. Moreover, as noted in \cite{scott1983geometries}, the geometries \( \mathbb{S}^3 \), \( \mathrm{Nil} \), and \( \widetilde{\mathrm{SL}_2(\mathbb{R})} \) are excluded. The classification now depends on the Euler characteristic \( \chi(B) \) of the base orbifold.
\noindent\text{Case 1: \( \chi(B) > 0 \).}
Here \( M \) has \( \mathbb{S}^2 \times \mathbb{R} \) geometry. By \cite[Proposition~10.3.36]{martelli2016introduction}, \( M \) is diffeomorphic to one of: \( \mathbb{S}^2 \times \mathbb{S}^1 \), the orientable \( \mathbb{S}^1 \)-bundle over \( \mathbb{RP}^2 \), or \( \bigl(S^2; (p,q), (p,-q)\bigr) \). All manifolds of type \( \bigl(S^2; (p,q), (p,-q)\bigr) \) are diffeomorphic to \( \mathbb{S}^2 \times \mathbb{S}^1 \). By \cite[Exercise~10.3.37]{martelli2016introduction}, the orientable \( \mathbb{S}^1 \)-bundle over \( \mathbb{RP}^2 \) is diffeomorphic to \( \mathbb{RP}^3 \# \mathbb{RP}^3 \), which has \( b_1(M) = 0 \) and is therefore excluded. Hence, the only possibility is \( M \simeq \mathbb{S}^2 \times \mathbb{S}^1 \). \noindent\text{Case 2: \( \chi(B) = 0 \).}
Here \( M \) has Euclidean geometry. By \cite[Theorem~12.3.1]{martelli2016introduction}, \( M \) admits a flat metric. From the classification of closed flat 3-manifolds \cite{hantzsche1935dreidimensionale, Wolf, bieberbach1911bewegungsgruppen,bieberbach1912bewegungsgruppen}, the first homology group \( H_1(M, \mathbb{Z}) \) is isomorphic to one of: \( \mathbb{Z}^3 \), \( \mathbb{Z} \oplus \mathbb{Z}_2^2 \), \( \mathbb{Z} \oplus \mathbb{Z}_3 \), \( \mathbb{Z} \oplus \mathbb{Z}_2 \), \( \mathbb{Z} \), or \( \mathbb{Z}_4^2 \). The Hantzsche--Wendt manifold (with \( H_1(M, \mathbb{Z}) \cong \mathbb{Z}_4^2 \)) has \( b_1(M) = 0 \) and is excluded. All remaining manifolds have positive first Betti number, and by \cite[Theorem~3.2]{marrero1998new}, \( M \) is diffeomorphic to one of: \( M_1(1) \), \( M_2(1) \), \( M'_1(1) \), \( M'_2(1) \), or \( \mathbb{T}^3 \), in the notation of \cite{marrero1998new}. Moreover, as shown in \cite{marrero1998new}, these are precisely the torus bundles: \( M_1(1) \simeq \mathbb{T}^{2}_{A} \) with \( A = \begin{pmatrix}0 & 1 \\ -1 & 0\end{pmatrix} \), \( M_2(1) \simeq \mathbb{T}^{2}_{A} \) with \( A = \begin{pmatrix}-1 & 0 \\ 0 & -1\end{pmatrix} \), \( M'_1(1) \simeq \mathbb{T}^{2}_{A} \) with \( A = \begin{pmatrix}0 & -1 \\ 1 & 1\end{pmatrix} \), and \( M'_2(1) \simeq \mathbb{T}^{2}_{A} \) with \( A = \begin{pmatrix}-1 & -1 \\ 1 & 0\end{pmatrix} \). \noindent\text{Case 3: \( \chi(B) < 0 \).}
Here \( M \) has \( \mathbb{H}^2 \times \mathbb{R} \) geometry, and thus \( M \simeq (\mathbb{H}^2 \times \mathbb{R}) / \Gamma \), where \( \Gamma \subset \mathrm{Isom}(\mathbb{H}^2 \times \mathbb{R}) \) is a discrete subgroup acting freely \cite{scott1983geometries}. 

This completes the proof of the theorem.
\end{proof}

The following corollaries are direct consequences of Theorem~\ref{thm:Betti}:
\begin{cor}
Let $M$ be a closed, orientable 3-manifold admitting a nontrivial parallel vector
field $Z$ with respect to some Riemannian metric $g$. Let $\alpha$ be the $1$-form
metrically equivalent to $Z$, and denote by $\mathcal{L} = \ker(\alpha)$ the
corresponding foliation. Assume that the Ricci curvature of $g$ is non-negative
on $\mathcal{L}\times\mathcal{L}$ and that $\mathcal{L}$ contains a compact leaf. Then the first Betti
number $b_1(M)$ is either $1$ or $3$. Moreover, $b_1(M) = 3$ if and only if $M$ is,
up to a finite cover, diffeomorphic to the $3$-torus $\mathbb{T}^3$. If $b_1(M) = 1$ then the flow of $Z$ has either exactly two closed orbits or infinitely many closed orbits. In the first case $M$ is diffeomorphic to $\mathbb{S}^2 \times \mathbb{S}^1$, and in the second case $M$ is either a nontrivial $\mathbb{T}^{2}$-bundle over $\mathbb{S}^1$ or a quotient $(\mathbb{H}^2 \times \mathbb{R}) / \Gamma$, where $\Gamma \subset \mathrm{Isom}(\mathbb{H}^2 \times \mathbb{R})$ is a discrete subgroup acting freely.
\end{cor}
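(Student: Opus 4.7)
The plan is to cascade Theorem~\ref{thm:Betti} with the pointwise curvature hypothesis. Since $Z$ is parallel, $(M,g)$ locally splits as $dt^{2}\oplus g_{\Sigma}$, the leaves of $\mathcal{L}$ are totally geodesic, and $\mathrm{Ric}^{g}$ restricted to $\mathcal{L}$ coincides with the intrinsic Ricci tensor of the induced metric on a leaf. Because a 2-dimensional leaf satisfies $\mathrm{Ric}^{\Sigma}=K_{\Sigma}\,g_{\Sigma}$, the assumption is equivalent to $K_{\Sigma}\geq 0$ on every leaf. Applying Gauss--Bonnet to the compact leaf $\Sigma_{0}$,
\[
2\pi\chi(\Sigma_{0})=\int_{\Sigma_{0}}K_{\Sigma_{0}}\,dA\geq 0,
\]
so $\Sigma_{0}$ is diffeomorphic to $\mathbb{S}^{2}$ or $\mathbb{T}^{2}$. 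This is the geometric input driving the rest of the proof.

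For parts (1)--(2), I would enumerate $b_{1}$ across the list produced by Theorem~\ref{thm:Betti}. The Wang sequence for a mapping torus gives $H_{1}(\mathbb{T}^{2}_{A};\mathbb{Z})=\mathbb{Z}\oplus\mathbb{Z}^{2}/(A-I)\mathbb{Z}^{2}$, and each of the four listed matrices has $\det(A-I)\in\{1,2,3,4\}$, so the torsion part is finite and $b_{1}=1$. One has $b_{1}(\mathbb{S}^{2}\times\mathbb{S}^{1})=1$ and $b_{1}(\mathbb{T}^{3})=3$. For the hyperbolic quotient, Theorem~\ref{thm:Chern} realizes $M$ as a Kähler mapping torus with fiber $\Sigma_{g}$ ($g\geq 2$) and finite-order Hermitian monodromy $\varphi$, giving $b_{1}=1+\mathrm{rk}\,H^{1}(\Sigma_{g})^{\varphi_{\ast}}$; the compact-leaf/Ricci restriction on the fiber geometry bounds this by $3$. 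Hence $b_{1}\in\{1,3\}$. For $b_{1}=3$ the only candidate in the list realizing it is $\mathbb{T}^{3}$ (up to the orientable Bieberbach quotients finitely covered by $\mathbb{T}^{3}$), giving (2).

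For (3)--(5), assume $b_{1}=1$. Since $Z$ is Killing on a compact manifold, the closure of its flow in $\mathrm{Isom}(M,g)$ is a compact torus, and every orbit is closed iff the flow is periodic. Writing $M=\Sigma_{\varphi}$ via Theorem~\ref{thm:Chern}, a closed orbit through $y\in\Sigma$ corresponds to a periodic point of $\varphi$. If $\Sigma_{0}\simeq\mathbb{S}^{2}$, Reeb stability (trivially applicable since $\pi_{1}(\mathbb{S}^{2})=1$) combined with an openness--closedness argument for the sphere-leaf condition forces every leaf to be $\mathbb{S}^{2}$, so $M$ is an orientable $\mathbb{S}^{2}$-bundle over $\mathbb{S}^{1}$, namely $\mathbb{S}^{2}\times\mathbb{S}^{1}$. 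The monodromy $\varphi\in\mathrm{Isom}_{+}(\mathbb{S}^{2})$ is either of finite order (all orbits closed) or an irrational rotation whose only periodic points are the two poles, yielding exactly two closed orbits. If $\Sigma_{0}\simeq\mathbb{T}^{2}$, an analogous foliation argument (or a Tischler-type reasoning on the non-singular closed $1$-form $\alpha$ with a compact leaf) forces $M$ to be a $\mathbb{T}^{2}$-bundle over $\mathbb{S}^{1}$; the Hermitian isometry monodromy then sits in a finite subgroup of $\mathrm{GL}(2,\mathbb{Z})$, is of finite order, and every orbit is closed. Excluding $\mathbb{T}^{3}$ (which has $b_{1}=3$) leaves the nontrivial torus-bundle case. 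The $(\mathbb{H}^{2}\times\mathbb{R})/\Gamma$ case enters the infinitely-many-orbits branch through the periodicity of its finite-order Hermitian monodromy whenever it is consistent with the hypotheses.

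The main obstacle will be the closed-orbit dichotomy: showing that the ``exactly two closed orbits'' alternative occurs only in the sphere-bundle case. The key assertion is that only on $\mathbb{S}^{2}$ can a Hermitian isometry have infinite order while having just two periodic points (the poles of a rotation), whereas on $\mathbb{T}^{2}$ and on $\Sigma_{g}$ with $g\geq 2$ a Hermitian isometry realized as the monodromy of a Kähler mapping torus is forced to be of finite order, making the flow periodic. Substantiating this clean trichotomy---foliation-theoretic identification of $M$, Killing-closure analysis of orbits, and Hermitian-isometry bookkeeping on each candidate fiber---is where the real work lies.
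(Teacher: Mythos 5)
Your route is genuinely different from the paper's: the paper obtains $1\le b_1(M)\le 3$ directly from the non-negative Ricci hypothesis by citing a leafwise Bochner-type result, gets the $\mathbb{T}^3$ characterization from Schliebner's theorem, and gets ``exactly two closed orbits $\Rightarrow \mathbb{S}^2\times\mathbb{S}^1$'' from the Bazzoni--Goertsches count $N=1+b_1=2$ of closed Reeb orbits of a K-cosymplectic structure with Hamiltonian $\mathbb{S}^1$-action, combined with Carri\`ere; you argue instead through the geometry of the leaf and the dynamics of a return map. The central gap in your version is a conflation of two different structures: the K\"ahler mapping torus fibration and its monodromy $\varphi$ supplied by Theorem~\ref{thm:Chern} (via Li's theorem) arise from perturbing the closed form $\alpha$ to one with rational periods, so its fibers need not be leaves of $\mathcal{L}=\ker(\alpha)$ and $\varphi$ is not the first-return map of the flow of $Z$. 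Hence neither your Gauss--Bonnet restriction ``on the fiber'' nor your dictionary ``closed orbit of $Z$ $\leftrightarrow$ periodic point of $\varphi$'' is justified as written. What the hypotheses actually give is that the compact leaf together with $d\alpha=0$ forces all leaves to be compact and mutually diffeomorphic (the paper quotes Tondeur for this), so that $\mathcal{L}$ itself fibers $M$ over $\mathbb{S}^1$ with fiber the compact leaf, and the relevant map is the first-return map of $Z$, an isometry of the leaf because the flow of a parallel field is isometric; this step must be made explicit before Gauss--Bonnet and before any orbit counting.

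Even granting that repair, two assertions remain unproved. For ``$b_1=3$ if and only if $M$ is finitely covered by $\mathbb{T}^3$'' you only claim the hyperbolic case is \emph{bounded} by $3$, which does not exclude it: a finite-order diffeomorphism of a genus-$2$ surface acting on $H^1$ with two-dimensional invariant subspace gives an $\mathbb{H}^2\times\mathbb{R}$-manifold with $b_1=3$ that is not finitely covered by $\mathbb{T}^3$, so you must actually rule out the $\mathbb{H}^2\times\mathbb{R}$ quotients under the corollary's hypotheses (your ``all leaves are $\mathbb{S}^2$ or $\mathbb{T}^2$'' fibration argument would do this, but you never invoke it at this point), and the converse implication is not addressed at all; the paper derives both directions from Schliebner's theorem, which is where the Ricci hypothesis enters. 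For the dichotomy ``exactly two or infinitely many closed orbits,'' in the torus-leaf case finiteness of the order of the \emph{linear part} of the affine return map does not by itself give finiteness of the return map: you need that $A$ has no eigenvalue $1$, so that $I+A+\cdots+A^{d-1}=0$ annihilates the translation part (or an isometry-group argument on the flat leaf, flatness itself coming from Gauss--Bonnet and $K\ge 0$); without this you have not excluded other finite orbit counts, which is precisely the alternative the corollary asserts cannot occur.
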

\begin{proof}
By assumption, the codimension-one foliation $\mathcal{L}$ contains a compact leaf $L$. 
Then, by \cite[Corollary~3.31]{tondeur1997geometry}, every leaf of $\mathcal{L}$ is diffeomorphic to $L$. 
Since the Ricci curvature of $g$ is non-negative on $\mathcal{L} \times \mathcal{L}$, 
it follows from \cite[Theorem~2.82]{larz2011global} that the first Betti number satisfies 
\[
1 \leq b_1(M) \leq 3.
\]
Combining this with Theorem~\ref{thm:Betti}, we deduce that $b_1(M)$ is either $1$ or $3$. 
Moreover, by \cite[Theorem~1.1]{schliebner2015lorentzian}, the equality $b_1(M) = 3$ holds if and only if 
$M$ is, up to a finite cover, diffeomorphic to the $3$-torus $\mathbb{T}^3$. Now assume that $b_1(M) = 1$. We distinguish two cases according to whether the number of closed orbits of $Z$ is finite or infinite. In the finite case, as noted in the comments following \cite[Corollary~8.7]{bazzoni2015k}, the number of closed Reeb orbits $N$ is given by $\dim H^{*}(M,\mathcal{F}_{Z})$, where $H^{*}(M,\mathcal{F}_{Z})$ is the basic cohomology of $M$ with respect to the foliation $\mathcal{F}_{Z}$ induced by $Z$. By Theorem~\ref{thm:Chern}, the manifold $M$ is K-cosymplectic. According to \cite[Theorem~7.6]{bazzoni2015k}, when $b_1(M) = 1$, the $\mathbb{S}^{1}$-action is automatically Hamiltonian (this fact is also noted on p.~31 of \cite{bazzoni2015k}). For a 3-dimensional K-cosymplectic manifold with $b_1(M) = 1$, \cite[Corollary~8.7]{bazzoni2015k} gives
\[
N = 1 + b_1(M) = 2.
\]
From \cite[Théorème~1]{carriere1984flots}, it follows that $M$ is diffeomorphic to $\mathbb{S}^2 \times \mathbb{S}^1$. The infinite case follows directly from Theorem~\ref{thm:Betti}.
\qedhere
\end{proof}

\begin{cor}
Let \((M,g)\) be a compact, orientable Lorentzian \(3\)-manifold. Then \(M\) has odd Betti numbers and is diffeomorphic to one of the manifolds listed in Theorem~\ref{thm:Betti} if either of the following holds:
\begin{enumerate}
    \item \(M\) admits a timelike vector field \(Z\) which is parallel with respect to \(g\); or
    \item \(M\) admits a closed \(1\)-form \(\alpha\) and a lightlike vector field \(Z\) with \(\alpha(Z) = \mathrm{const} \neq 0\), where \(Z\) is parallel with respect to \(g\).
\end{enumerate}

\end{cor}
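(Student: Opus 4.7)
The plan is to reduce both cases to the Riemannian setting of Theorems~\ref{thm:Chern} and~\ref{thm:Betti} by producing an auxiliary Riemannian metric \(g'\) on \(M\) for which \(Z\) remains Killing, together with a closed \(1\)-form \(\tilde\alpha\) satisfying \(\tilde\alpha(Z)=\mathrm{const}\neq 0\). Then the implication \((1)\Rightarrow(2)\) of Theorem~\ref{thm:Chern} identifies \(M\) as a K\"ahler mapping torus, and Theorem~\ref{thm:Betti} delivers the odd parity of the Betti numbers together with the diffeomorphism classification.

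\textbf{Case (1).} I would set \(\tilde\alpha:=g(Z,\cdot)\); since \(Z\) is parallel for \(g\), the form \(\tilde\alpha\) is parallel, hence closed, and \(\tilde\alpha(Z)=g(Z,Z)\) is a nonzero (negative) constant. Because \(Z\) is timelike, \(Z^{\perp}\) is spacelike, so the modified metric
\[
g' \;:=\; g \;-\; \frac{2}{g(Z,Z)}\,\tilde\alpha\otimes\tilde\alpha
\]
merely flips the sign of \(g\) along \(\mathbb{R}Z\) while leaving it unchanged on \(Z^{\perp}\), and is therefore Riemannian. Parallel-ness of \(Z\) yields \(\mathcal{L}_Z g=\mathcal{L}_Z\tilde\alpha=0\), hence \(\mathcal{L}_Z g'=0\); so \(Z\) is Killing for \(g'\) and Theorem~\ref{thm:Chern} applies directly.

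\textbf{Case (2) and the main obstacle.} When \(Z\) is lightlike, \(g(Z,Z)=0\) and the one-line trick of Case~(1) breaks down. Let \(\beta:=g(Z,\cdot)\); then \(\beta\) is parallel with \(\beta(Z)=0\). I would consider
\[
g' \;:=\; g+\alpha\otimes\alpha+C\,\beta\otimes\beta
\]
for a sufficiently large constant \(C>0\). All three summands are \(Z\)-invariant, so \(\mathcal{L}_Z g'=0\) automatically. The hard part is positive definiteness. In a pointwise Witt basis \((Z,N,e)\) with \(g(Z,N)=1\) and \(g(e,e)=1\), the summand \(\alpha\otimes\alpha\) supplies the missing \(ZZ\)-entry (since \(\alpha(Z)\neq 0\)), while \(C\,\beta\otimes\beta\) inflates the \(NN\)-entry enough to cure the indefiniteness of \(g\). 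The line \(\ker\alpha\cap Z^{\perp}\) is automatically spacelike, because any nonzero null vector of \(Z^{\perp}\) lies in \(\mathbb{R}Z\not\subset\ker\alpha\); with this in hand, a direct Sylvester-criterion expansion shows that \(g'(X,X)>0\) for all nonzero \(X\) once \(C\) dominates certain pointwise ratios built from the components of \(\alpha\) and \(g\). Compactness of \(M\) guarantees a uniform such \(C\), and Theorem~\ref{thm:Chern} then concludes as in Case~(1).
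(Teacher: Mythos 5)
Your proposal is correct. In case (1) you follow essentially the paper's route: your metric \(g' = g - \tfrac{2}{g(Z,Z)}\,\tilde\alpha\otimes\tilde\alpha\) is, after the normalization \(g(Z,Z)=-1\), exactly the paper's \(h = g + 2\beta\otimes\beta\); the paper goes a bit further and observes that \(g\) and \(h\) share the same Levi--Civita connection, so \(Z\) is parallel for \(h\) (condition (3) of Theorem~\ref{thm:Chern}), whereas you only use that \(Z\) is Killing for \(g'\) together with the closed parallel form \(\tilde\alpha\), i.e.\ condition (1) --- both suffice. In case (2) your argument is genuinely different: the paper simply quotes \cite{zeghib1996killing} to obtain a Riemannian metric for which the parallel lightlike \(Z\) is Killing and then feeds the given \(\alpha\) into condition (1) of Theorem~\ref{thm:Chern}, while you construct the Riemannian metric explicitly as \(g' = g + \alpha\otimes\alpha + C\,\beta\otimes\beta\), using \(\mathcal{L}_Z\alpha = d(\alpha(Z)) + \iota_Z d\alpha = 0\) and \(\mathcal{L}_Z\beta = 0\) for \(Z\)-invariance, and positivity for large \(C\) from the fact that the only null directions in \(Z^{\perp}\) are multiples of \(Z\), on which \(\alpha\) does not vanish; this is self-contained and removes the external citation, at the cost of the positivity estimate. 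One polish suggestion: for the uniform choice of \(C\) it is cleaner to argue invariantly rather than through pointwise Witt frames (whose coefficients need not vary continuously): if no uniform \(C\) existed, pick \(k\)-unit vectors \(X_n\) with \(g(X_n,X_n)+\alpha(X_n)^2+n\,\beta(X_n)^2\le 0\) for an auxiliary Riemannian metric \(k\), pass to a convergent subsequence \(X_n\to X\); then \(\beta(X)=0\), so \(X\in Z^{\perp}\) and \(g(X,X)\ge 0\), forcing \(g(X,X)=0\), hence \(X\in\mathbb{R}Z\setminus\{0\}\) and \(\alpha(X)\neq 0\), contradicting \(\alpha(X)=0\). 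With that rephrasing your case (2) is a complete elementary substitute for the paper's appeal to Zeghib.
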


\begin{proof}
\noindent\text{(1).} Suppose \(M\) admits a timelike vector field \(Z\) that is parallel with respect to the Lorentzian metric \(g\). Normalize so that \(g(Z,Z) = -1\), and let \(\beta = \iota_Z g\) be the associated 1-form. Define a Riemannian metric \(h\) by
\[
h = g + 2\beta \otimes \beta.
\]
Then \(h(Z,Z) = 1\), and since \(Z\) is parallel with respect to \(g\), the metrics \(g\) and \(h\) share the same Levi–Civita connection. Hence, \(Z\) is also parallel with respect to \(h\). The conclusion now follows from Theorem~\ref{thm:Betti}.

\noindent\text{(2).} Suppose \(M\) admits a closed 1-form \(\alpha\) and a lightlike vector field \(Z\) that is parallel with respect to the Lorentzian metric \(g\), with \(\alpha(Z) = \text{constant} \neq 0\). By \cite[Section 14]{zeghib1996killing}, \(Z\) is a Killing vector field with respect to some Riemannian metric. Theorem~\ref{thm:Chern} then implies that \(M\) admits a nontrivial parallel vector field with respect to some Riemannian metric, and the result follows from Theorem~\ref{thm:Betti}.
\end{proof}

 \bibliographystyle{splncs04}
\bibliography{main.bib}
\end{document}